\documentclass[11pt]{article}
\usepackage[margin=1in]{geometry}
\usepackage[T1]{fontenc}
\usepackage{lmodern,amsmath,amssymb,amsthm}
\usepackage[hidelinks]{hyperref}
\usepackage{xurl}
\newcommand{\E}{\mathbb E}
\newcommand{\Var}{\operatorname{Var}}
\newtheorem{theorem}{Theorem}
\newtheorem{corollary}[theorem]{Corollary}
\newtheorem{lemma}[theorem]{Lemma}
\newtheorem{proposition}[theorem]{Proposition}
\title{Variance of random greedy independent sets in triangle-free graphs}
\author{Mubin Shaikh\\
{\normalsize Independent Researcher}\\
{\small \href{mailto:shaikhmubin572@gmail.com}{\nolinkurl{shaikhmubin572@gmail.com}}}\\
{\small ORCID: \href{https://orcid.org/0009-0005-1290-7114}{0009-0005-1290-7114}}}
\date{September 13, 2026}
\hypersetup{pdftitle={Variance of random greedy independent sets in triangle-free graphs},pdfauthor={Mubin Shaikh}}
\begin{document}
\maketitle
\begin{abstract}
Inspect the vertices of a finite simple graph in uniformly random order, accepting each vertex if none of its neighbors has previously been accepted. Let $X_G$ be the number of accepted vertices. For every triangle-free graph with $n\ge2$ vertices and $e$ edges, we prove
\[
 \Var(X_G)\le e\left(\frac{n-2}{n}\right)^2,
\]
with equality precisely for edgeless graphs and connected stars. In particular, among trees of a given order, the star uniquely maximizes the variance, with value $(n-1)(n-2)^2/n^2$. Known expected vertex-deletion stability already yields the elementary baseline $\Var(X_G)\le e$. We obtain the sharp finite-order refinement by combining a stronger centered first-choice estimate with a triangle-free edge-count identity in the law of total variance.
\end{abstract}

\section{Introduction}

Random-order greedy acceptance produces a maximal independent set, but generally not a maximum independent set. Its cardinality is random even when the underlying graph is fixed. Equivalently, assign independent uniform arrival times in $[0,1]$ to the vertices and occupy a vertex on arrival if no neighbor is already occupied. This is terminal blocking random sequential adsorption, as studied by Penrose and Sudbury~\cite{PS}.

We consider the variance of the accepted cardinality. Throughout, graphs are finite, simple, and undirected; they need not be connected. Let $X_G$ denote the accepted count, and write $e(G)=|E(G)|$.

\begin{theorem}\label{thm:main}
If $G$ is triangle-free and $n=|V(G)|\ge2$, then
\begin{equation}\label{eq:main}
 \Var(X_G)\le e(G)\left(\frac{n-2}{n}\right)^2.
\end{equation}
Equality holds if and only if $G$ is edgeless or $G\cong K_{1,n-1}$.
\end{theorem}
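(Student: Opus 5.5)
The plan is induction on $n$, using the law of total variance with the first inspected vertex as the conditioning variable. Let $v$ be the first vertex in the random order, uniform on $V(G)$. Then $X_G = 1 + X_{G_v}$, where $G_v := G - N[v]$. Conditionally on $v$, the remaining order is a uniform order of $V(G_v)$. Writing $f(H)=\E X_H$, we get
\[
\Var(X_G)=\frac1n\sum_{v}\Var(X_{G_v})+\Var_v\bigl(f(G_v)\bigr).
\]
The base cases are small or edgeless graphs; if $G$ is edgeless, $X_G=n$ is deterministic. Each $G_v$ is triangle-free with $n_v=n-1-d(v)$ vertices. So when $n_v\ge2$ the induction hypothesis gives $\Var(X_{G_v})\le e(G_v)\bigl((n_v-2)/n_v\bigr)^2$. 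When $n_v\le1$ we have $\Var(X_{G_v})=0=e(G_v)$. In all cases we may use the cruder bound $\Var(X_{G_v})\le e(G_v)\bigl((n-3)/(n-1)\bigr)^2$.

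Next comes the triangle-free edge-count identity. Since $N(v)$ is independent, the edges meeting $N[v]$ are exactly the edges incident to $N(v)$, and none is counted twice. Hence
\[
e(G_v)=e-\sum_{u\sim v} d(u).
\]
Averaging over $v$ gives $\frac1n\sum_v e(G_v)=e-\frac1n\sum_u d(u)^2$. The within-term is therefore at most $\bigl(e-\frac1n\sum_u d(u)^2\bigr)\bigl(\frac{n-3}{n-1}\bigr)^2$. It then suffices to prove a ``centered first-choice estimate'' of the form
\[
\Var_v\bigl(f(G_v)\bigr)\le e\Bigl(\tfrac{n-2}{n}\Bigr)^2-\Bigl(e-\tfrac1n\textstyle\sum_u d(u)^2\Bigr)\Bigl(\tfrac{n-3}{n-1}\Bigr)^2 .
\]
The star case shows this is tight. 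There every $G_v$ is edgeless, so the entire variance $(n-1)(n-2)^2/n^2$ sits in the between-term: $f(G_v)$ equals $0$ at the center and $n-2$ at each leaf.

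For the between-term I would write
\[
\Var_v\bigl(f(G_v)\bigr)=\frac1{2n^2}\sum_{v,w}\bigl(f(G_v)-f(G_w)\bigr)^2 ,
\]
and control the differences using expected vertex-deletion stability, $|f(H)-f(H-x)|\le1$. The point is that $f(G_v)$ is close to $f(G)-1$ shifted by a local quantity. I would compare $f(G_v)$ with $f(G)$ by deleting $N[v]$ one vertex at a time, giving $f(G_v)=f(G)-1+\varepsilon_v$. The main task is to show that the fluctuation $\varepsilon_v$ is driven by degree information, with $\sum_v(\varepsilon_v-\bar\varepsilon)^2$ bounded in terms of $\sum_u d(u)^2$ and $e$. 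The weaker version of this, which gives only $\Var\le e$, is the known baseline. The sharpening needs the exact centering constant so that the $(n-2)/n$ factor appears.

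I expect this centered estimate to be the main obstacle. My first attempt would be to prove it in the form $\Var_v(f(G_v))\le \frac1n\sum_u d(u)^2\cdot c_n$ with the optimal $c_n$ read off from the star. I would then check that the resulting inequality combined with the within-term bound reduces to $\sum_u d(u)^2\le$ something compatible with $e$, using $\sum_u d(u)^2\le e\cdot\max_{uv\in E}(d(u)+d(v))\le e\,n$, which holds for triangle-free graphs. Equality tracking comes last. Equality forces equality in the edge-sum bound, i.e. $d(u)+d(v)=n$ on every edge. It also forces every $G_v$ to be an equality case for the induction, hence edgeless or a star. Together with triangle-freeness, this should leave only edgeless graphs and $K_{1,n-1}$.
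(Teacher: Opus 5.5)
\paragraph{A genuine gap: the key lemma is missing.}
Your reduction is the same as the paper's: total variance over the first vertex, induction on the residual graphs $G_v$, and the triangle-free identity $\frac1n\sum_v e(G_v)=e-\frac1n\sum_u d(u)^2$. Your variant, which puts the factor $(\frac{n-3}{n-1})^2$ on the within-term and then uses $\sum_u d(u)^2\le ne$, is also fine. You correctly see that everything then hinges on
\[
 \Var_v\bigl(f(G_v)\bigr)\le\Bigl(\frac{n-2}{n}\Bigr)^2\frac1n\sum_u d(u)^2 .
\]
But you never prove this, and it is the entire content of the theorem beyond the baseline $\Var(X_G)\le e$.

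Applied termwise, the tools you name cannot give it, because on $K_{1,n-1}$ this inequality is an equality. Chaining deletion stability over $N(v)$ yields only $|\varepsilon_v|\le d(v)$, which gives the bound $\frac1n\sum_u d(u)^2=n-1$ on the star. The pairwise form $|f(G_v)-f(G_w)|\le|N[v]\triangle N[w]|$ yields $(n-1)(n-2)/n$, because it charges $2$ to leaf--leaf pairs whose true difference is $0$. Both exceed the required $(n-1)(n-2)^2/n^2$ for $n\ge3$.

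\paragraph{The missing idea.}
Condition on the first vertex a second time, now applied to the bias $b_x(H)=f(H)-1-f(H-N_H[x])$, so that $b_x(G)=-\varepsilon_x$. For $u\notin N[x]$ the deletions of $N[u]$ and $N[x]$ commute. Summing over the first vertex then gives the exact recurrence
\[
 n\,b_x(G)=\sum_{u\in N(x)}\bigl(f(G_u)-f(G_x)\bigr)+\sum_{u\notin N[x]}b_x(G_u).
\]
Only at this point are the crude bounds used:
\begin{itemize}
\item $|f(G_u)-f(G_x)|\le d(u)+d(x)-2$ for $u\sim x$, because triangle-freeness gives $N[u]\cap N[x]=\{u,x\}$;
\item $|b_x(G_u)|\le d_{G_u}(x)$ for $u\notin N[x]$.
\end{itemize}
The triangle-free count
\[
 \sum_{u\notin N[x]}d_{G_u}(x)=(n-d(x)-1)\,d(x)-\sum_{w\in N(x)}\bigl(d(w)-1\bigr)
\]
makes the neighbor degrees cancel. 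What remains is $|b_x(G)|\le\frac{n-2}{n}d(x)$, which gives the displayed bound pointwise.

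\paragraph{A separate hole in the equality case.}
The conditions you list ($d(u)+d(v)=n$ on every edge, and every $G_v$ edgeless) hold for every $K_{a,b}$. For example, $K_{2,2}$ satisfies them, yet $\Var(X_G)=0<1$ there. You must also use equality in the between-term. Alternatively, as the paper does, reduce to $G\cong K_{a,b}$ and compute $\Var(X_{K_{a,b}})=ab(a-b)^2/(a+b)^2$. This equals $ab\bigl(\frac{n-2}{n}\bigr)^2$ only when $\min(a,b)=1$.
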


\begin{corollary}\label{cor:tree}
For every nonempty tree $T$ of order $n$,
\[
 \Var(X_T)\le \frac{(n-1)(n-2)^2}{n^2},
\]
with equality if and only if $T\cong K_{1,n-1}$. The singleton is included as $K_{1,0}$.
\end{corollary}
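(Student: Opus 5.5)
This corollary follows directly from Theorem~\ref{thm:main}. Every tree is triangle-free and a tree of order $n$ has exactly $e(T)=n-1$ edges, so the plan is to plug these into the theorem and then handle the boundary case $n=1$ separately, since the theorem assumes $n\ge2$.

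For $n\ge2$, Theorem~\ref{thm:main} gives
\[
 \Var(X_T)\le (n-1)\left(\frac{n-2}{n}\right)^2=\frac{(n-1)(n-2)^2}{n^2}.
\]
The theorem says equality holds exactly when $T$ is edgeless or $T\cong K_{1,n-1}$. A tree on $n\ge2$ vertices is connected and so has at least one edge, which rules out the edgeless alternative. Hence equality holds if and only if $T\cong K_{1,n-1}$. For $n=2$ the only tree is $K_{1,1}$, and both sides equal $0$. This is consistent: in $K_2$ exactly one vertex is accepted, so $X_T=1$ deterministically.

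For $n=1$ the tree is the single vertex $K_{1,0}$. Here $X_T=1$ almost surely, so $\Var(X_T)=0$. The right-hand side is $(0)(1)^2/1=0$, so equality holds, and $K_{1,0}$ is the unique tree of order $1$. The "if and only if" statement is therefore trivially true in this case.

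No step here is a real obstacle; all the substance lies in Theorem~\ref{thm:main}. The only points needing care are excluding the edgeless equality case for $n\ge2$, and treating $n=1$ directly because it lies outside the hypothesis $n\ge2$ of the theorem.
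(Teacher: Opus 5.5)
Your proposal is correct and matches the paper's proof. Both apply Theorem~\ref{thm:main} with $e(T)=n-1$ for $n\ge2$, where the edgeless equality case is excluded because a tree of order at least two has an edge, and both check the singleton directly, since $X_T\equiv1$ and the bound is $0$.
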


Gamarnik and Goldberg~\cite[Theorem~6, equation~(9)]{GG} give $\Var(X_G)\le C(r)n$ under a maximum-degree bound $r\ge3$, without a girth restriction. Penrose and Sudbury~\cite[Section~2.7, discussion following Theorem~7]{PS} also note a linear variance upper bound in the uniformly bounded-degree setting, alongside their correlation and normal-approximation results. Krivelevich, M\'esz\'aros, Michaeli, and Shikhelman~\cite[Theorem~1.2]{KMMS} prove concentration under subfactorial path growth and local convergence. Their Theorem~1.4 makes the path mean-minimal among trees, while Section~8 makes the star uniquely mean-maximal. Corollary~\ref{cor:tree} gives the variance analogue of the latter extremal statement; a bound on the mean alone does not control the variance. In terms of average degree $\bar d=2e(G)/n$, \eqref{eq:main} reads
\[
 \Var(X_G/n)\le \frac{\bar d}{2n}\left(1-\frac2n\right)^2.
\]
Our bound imposes no restriction on the maximum degree.

The degree-free baseline $\Var(X_G)\le e(G)$ is already an elementary consequence of the expected deletion stability of Censor-Hillel, Haramaty, and Karnin~\cite{CHHK}, together with first-choice conditioning and a triangle-free edge count; we give this deduction in Proposition~\ref{prop:baseline}. Related dynamic-MIS and sensitivity methods appear in~\cite{CZ,VY}. Thus the point of Theorem~\ref{thm:main} is the sharp finite-order coefficient and equality classification, not the existence of a linear-in-edge-count variance bound. The refinement uses a stronger centered first-choice estimate. We include a proof of the needed deletion-stability statement for self-containment.

Sudbury~\cite[Theorem~1]{Sudbury} compares single-vertex occupation probabilities on regular and branching-random trees. For a uniformly random labelled tree, Panholzer~\cite[Theorem~4]{Panholzer} obtains an exact accepted-count distribution, variance $n/16+O(1)$, and a central limit theorem. These are averaged over the random tree as well as the arrival order; they do not by themselves give a variance bound for every fixed tree.

\section{Deletion stability and first-choice means}

Write $I(G,\pi)$ for the independent set obtained from an order $\pi$, and put
\[
 \mu(G)=\E X_G,\qquad v(G)=\Var(X_G),\qquad
 \mu(\varnothing)=v(\varnothing)=0.
\]
When comparing induced subgraphs, use the restrictions of the same uniform permutation. Let $N_G(x)$ and $N_G[x]$ be the open and closed neighborhoods, and let $d_G(x)=|N_G(x)|$. For a nonempty graph set
\[
 R_u=G-N_G[u],\qquad a_u(G)=1+\mu(R_u),\qquad
 b_u(G)=\mu(G)-a_u(G).
\]
Conditional on $u$ being first, the remaining accepted count has the law of $X_{R_u}$. Therefore, with $n=|V(G)|$,
\begin{equation}\label{eq:mean}
 \mu(G)=\frac1n\sum_u a_u(G),\qquad \sum_u b_u(G)=0.
\end{equation}
Define $c_0=c_1=0$ and $c_n=(n-2)/n$ for $n\ge2$. Then $0\le c_n\le1$, and $c_n$ is strictly increasing for $n\ge2$.

\begin{lemma}[Deletion stability, {\cite[Theorem~1]{CHHK}}]\label{lem:deletion}
For any graph $G$ and fixed vertex $x\in V(G)$,
\begin{equation}\label{eq:stability}
 \E|I(G,\pi)\triangle I(G-x,\pi)|\le1.
\end{equation}
Consequently $|\mu(G)-\mu(G-x)|\le1$, and for subsets $A,B\subseteq V(G)$,
\begin{equation}\label{eq:lipschitz}
 |\mu(G[A])-\mu(G[B])|\le |A\triangle B|.
\end{equation}
\end{lemma}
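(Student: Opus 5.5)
We prove the deletion-stability inequality \eqref{eq:stability} directly by coupling the two greedy runs along the common random order $\pi$. The two consequences then follow at once.

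\textbf{Step 1: the disagreement set is a path.} Fix $\pi$ and run the greedy process on $G$ and on $G-x$ simultaneously, inspecting vertices in the order $\pi$. Before $x$ is inspected, the two runs agree. If $x$ is rejected in $G$, they agree forever, and the symmetric difference is empty.

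Suppose instead that $x$ is accepted in $G$. We track the set $D$ of vertices whose status differs between the runs. Inductively, we claim that at every time $D$ is the vertex set of a path $x=y_0,y_1,\dots,y_k$. Here $y_{i+1}\in N(y_i)$, the $y_i$ alternate between $I(G,\pi)$ and $I(G-x,\pi)$, and the $y_i$ are inspected in increasing order.

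To verify the claim, consider a vertex $w$ inspected later. Its status can differ only if some neighbor of $w$ lies in $D$. If $w$ has a neighbor in $D$ that is accepted in one run, then $w$ is rejected in that run. The status of $w$ in the other run is decided by its other neighbors. The key point is that $w$ can become a new disagreement only if its unique blocking neighbor in one run is the most recent disagreement $y_k$; otherwise the two runs agree on $w$. Proving this is routine bookkeeping: earlier disagreements $y_i$ with $i<k$ each have a successor in $D$ that is accepted in the opposite run. One has to check that this structure forces agreement at $w$ unless $y_k$ is involved.

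The cleaner formulation we will actually use is this. After $y_k$ is inspected, the next disagreement, if any, is the first vertex of $N(y_k)$ in $\pi$-order that would be accepted in the run where $y_k$ is absent.

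\textbf{Step 2: the expected length is at most $1$.} Conditioned on the history up to inspection of $y_k$, the continuation depends only on the relative order of the uninspected vertices. This order is still uniform. We then bound $\E|D|$ by a recursion or a charging argument.

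One route is to show that each extension step happens with probability at most $1/2$ given the past. This holds because $y_k$ must precede the candidate $y_{k+1}$, and symmetrically the vertex blocking $y_{k+1}$ in the other run competes with it. That bound gives $\E|D|\le \sum_k 2^{-k}\cdot(\text{indicator } x\text{ accepted})$. Since $x$ is accepted with probability at most $\Pr(x \text{ precedes all neighbors})$, summing would give the claimed bound $1$.

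If this route is not tight enough, the fallback is the original argument of~\cite{CHHK}. That argument proves $\E|D|\le 1$ by induction on $|V(G)|$, conditioning on the first vertex $v$ of $\pi$:
\begin{itemize}
\item if $v=x$, the path has expected length bounded by $1$ plus the disagreement when deleting the neighbors;
\item otherwise, we recurse on $G-N[v]$ with $x$ still present, or with $x$ removed if $x\in N(v)$, in which case $D=\emptyset$.
\end{itemize}
Here the difficulty is to carry an induction hypothesis strong enough for the $v=x$ branch. We would strengthen it to: for any set $S$, deleting a set $S$ of vertices is controlled by the expected symmetric difference. We expect this step to be the main obstacle. Since the lemma is cited from~\cite[Theorem~1]{CHHK}, we may simply invoke it.

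\textbf{Step 3: the consequences.} Given \eqref{eq:stability}, we have
\[
|\mu(G)-\mu(G-x)|=|\E(|I(G,\pi)|-|I(G-x,\pi)|)|\le \E|I(G,\pi)\triangle I(G-x,\pi)|\le 1.
\]
For \eqref{eq:lipschitz}, pass from $G[A]$ to $G[A\cap B]$ by deleting the vertices of $A\setminus B$ one at a time, applying the bound to successive induced subgraphs. Then pass from $G[A\cap B]$ to $G[B]$ by adding the vertices of $B\setminus A$ one at a time. The triangle inequality gives a total of at most $|A\setminus B|+|B\setminus A|=|A\triangle B|$.
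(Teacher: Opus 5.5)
\textbf{There is a genuine gap.} Your Step~3 is correct and matches the paper's deduction, but you never actually establish the core inequality \eqref{eq:stability}.

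\textbf{Step~1 is false.} The disagreement set need not be a path. Take $G=K_{1,k}$ with centre $x$, and let $x$ come first in $\pi$. Then $I(G,\pi)=\{x\}$, while $I(G-x,\pi)$ is the set of all $k$ leaves. So the disagreement set is the whole star, of size $k+1$, not an alternating path: deleting $x$ can release many of its neighbours at once. Every other first vertex gives discrepancy $0$ here, so this example also shows \eqref{eq:stability} is tight. Hence the ``next disagreement'' formulation fails, and with it the geometric-decay route of Step~2. That route has further defects:
\begin{itemize}
\item the per-step bound $1/2$ is unjustified;
\item $\sum_{k\ge0}2^{-k}=2$, not $1$;
\item the inequality $\Pr(x\text{ accepted})\le\Pr(x\text{ precedes all its neighbours})$ goes the wrong way. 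On the path $x$--$y$--$z$, $x$ is accepted with probability $2/3>1/2$.
\end{itemize}

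\textbf{The fallback is the right skeleton but is left unfinished.} It is the paper's induction on $|V(G)|$, conditioning on the first vertex $u$. You stop at the $u=x$ branch, believing a strengthened hypothesis about deleting arbitrary sets is required, and then defer to the citation. No strengthening is needed.

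When $u=x$, the two outputs are $\{x\}\cup I(R_x,\pi)$ and $I(G-x,\pi)$. Here $R_x$ arises from the smaller graph $G-x$ by deleting the $d=d_G(x)$ neighbours of $x$ one at a time. The restricted order is still uniform, so the triangle inequality and the single-vertex hypothesis on graphs of order less than $n$ bound the conditional expectation by $1+d$.

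The idea you missed is that this excess $d$ is paid for exactly by the $d$ branches in which a neighbour of $x$ comes first. In those branches both runs accept that neighbour and continue on the same residual graph, so the discrepancy is $0$. Each of the other $n-d-1$ branches contributes at most $1$, by induction on $R_u$. Averaging gives $\bigl(d\cdot0+(n-d-1)\cdot1+(1+d)\bigr)/n=1$, which closes the induction.
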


\begin{proof}
Induct on $n=|V(G)|$, the singleton case being immediate. Fix $x$, write $d=d_G(x)$, and condition on the first vertex $u$. The remaining relative order is uniform.

If $u\in N_G(x)$, both runs accept $u$ and thereafter reduce to the same graph $R_u$, so their discrepancy is zero. If $u\notin N_G[x]$, both runs accept $u$ and the discrepancy equals that for deleting $x$ from $R_u$. Its conditional expectation is at most one by induction.

If $u=x$, the full run gives $\{x\}\cup I(R_x,\pi)$, whereas the deleted run gives $I(G-x,\pi)$. Delete the $d$ neighbors of $x$ successively from $G-x$, in any fixed order. The triangle inequality bounds the symmetric difference by one plus the sum of these $d$ deletion discrepancies. Each intermediate graph is smaller than $G$ and has a uniform restricted order, so the conditional expectation is at most $1+d$. Averaging gives
\[
 \E|I(G,\pi)\triangle I(G-x,\pi)|
 \le\frac{d\cdot0+(n-d-1)\cdot1+(d+1)}n=1.
\]
The mean assertion follows by bounding cardinality difference by symmetric difference. Telescoping deletions from $A$ and $B$ to $A\cap B$ proves \eqref{eq:lipschitz}.
\end{proof}

Deleting the open neighborhood $N_G(x)$ leaves $x$ isolated. Hence, for every graph $G$ and vertex $x$, \eqref{eq:lipschitz} gives the elementary centered estimate
\begin{equation}\label{eq:crude-bias}
 |b_x(G)|=|\mu(G)-\mu(G-N_G(x))|\le d_G(x),
 \qquad \mu(G-N_G(x))=1+\mu(R_x).
\end{equation}
The next lemma sharpens this estimate for triangle-free graphs.

\begin{lemma}[Centered first-choice estimate]\label{lem:centered}
For every triangle-free graph $G$ of order $n\ge2$ and every $x\in V(G)$,
\begin{equation}\label{eq:centered}
 |b_x(G)|\le c_n d_G(x).
\end{equation}
For a singleton, $b_x(G)=0$. Hence, for a uniform vertex $u$,
\begin{equation}\label{eq:between}
 \Var_u(a_u(G))=\frac1n\sum_u b_u(G)^2
 \le\frac{c_n^2}{n}\sum_u d_G(u)^2.
\end{equation}
\end{lemma}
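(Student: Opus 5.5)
The plan is to induct on $n$, using the first-choice decomposition \eqref{eq:mean} to turn $b_x(G)$ into an average of quantities attached to smaller graphs. Fix $x$ and put $d=d_G(x)$. If $d=0$, then $G-N_G(x)=G$ and $b_x(G)=0$, so assume $d\ge1$. By \eqref{eq:crude-bias}, $b_x(G)=\mu(G)-a_x(G)$. Averaging \eqref{eq:mean} over the first vertex $u$ then gives
\[
 n\,b_x(G)=\sum_{u}\bigl(\mu(R_u)-\mu(R_x)\bigr).
\]
The term $u=x$ vanishes. I will treat separately the non-neighbours $u\notin N_G[x]$ and the neighbours $u\in N_G(x)$.

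\emph{Non-neighbours.} For $u\notin N_G[x]$, the vertex $x$ survives in $R_u$ and $u$ survives in $R_x$. Moreover
\[
 R_u-N_{R_u}[x]=R_x-N_{R_x}[u]=G-(N_G[u]\cup N_G[x]).
\]
Expanding the definitions of $b_x(R_u)$ and $b_u(R_x)$ gives the symmetric identity
\[
 \mu(R_u)-\mu(R_x)=b_x(R_u)-b_u(R_x).
\]
The key point is that $V(R_x)$ is exactly $V(G)\setminus N_G[x]$, so by \eqref{eq:mean} applied to $R_x$,
\[
 \sum_{u\notin N_G[x]}b_u(R_x)=0,
\]
and this whole family of terms disappears. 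What remains is $\sum_{u\notin N_G[x]}b_x(R_u)$. By the induction hypothesis (or the singleton case) each summand satisfies $|b_x(R_u)|\le c_{n-1}\,d_{R_u}(x)\le c_{n-1}d$, because $R_u$ has at most $n-1$ vertices and $c$ is increasing.

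\emph{Neighbours.} For $u\in N_G(x)$, the same definitions give
\[
 \mu(R_u)-\mu(R_x)=b_x(G)-b_u(G).
\]
Substituting this, I get the linear relation
\[
 (n-d)\,b_x(G)=\sum_{u\notin N_G[x]}b_x(R_u)-\sum_{u\in N_G(x)}b_u(G).
\]
Triangle-freeness is used here. Since $N_G(u)\cap N_G(x)=\varnothing$ for adjacent $u,x$, the graphs $R_u$ and $R_x$ are disjoint from $u$ and $x$ and share the common part $G-(N_G[u]\cup N_G[x])$. This should let me control the neighbour terms through the Lipschitz bound \eqref{eq:lipschitz} rather than through $\mu(G)$ itself.

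\emph{Main obstacle.} The step I expect to be hardest is closing this relation, since the neighbour sum involves the unknowns $b_u(G)$. My first attempt is an extremal argument: choose $x$ maximizing the ratio $|b_x(G)|/d_G(x)$ over non-isolated vertices, and combine the relation above with the crude bound \eqref{eq:crude-bias} for the neighbours. If the resulting degree dependence on $d_G(u)$ does not cancel, I will instead bound $\mu(R_u)-\mu(R_x)$ directly for $u\in N_G(x)$. The idea is to compare both graphs to the graph obtained from $G$ by deleting $u$, $x$ and the common part, using deletion stability one vertex at a time, and aiming for a bound of order $d$ per neighbour with the sign information needed to absorb it.

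The target arithmetic is as follows. The $n-d-1$ non-neighbour terms, each of size at most $c_{n-1}d$, together with the neighbour contribution, should total at most $(n-2)d$. Dividing by $n$ then gives $c_n d$. Finally, \eqref{eq:between} follows immediately by squaring \eqref{eq:centered} and averaging over $u$, using $\sum_u b_u(G)=0$ from \eqref{eq:mean}.
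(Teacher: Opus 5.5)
\textbf{Verdict: genuine gap.} Your reduction is correct. Cancelling $\sum_{u\notin N_G[x]}b_u(R_x)=0$ gives exactly the paper's recurrence \eqref{eq:recurrence}, and \eqref{eq:between} does follow from \eqref{eq:centered} as you say. The problem is that the proposal never closes the recurrence, and the arithmetic you target cannot close it.

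Take $G=K_{1,n-1}$ with $n\ge4$ and $x$ a leaf, so $d=1$ and the only neighbour $u$ is the centre. Then $R_u=\varnothing$ and $R_x$ consists of $n-2$ isolated vertices, so the neighbour term is $\mu(R_u)-\mu(R_x)=-(n-2)$. This one term already uses up the whole budget $(n-2)d$, and \eqref{eq:centered} is an equality here, since $b_x(G)=-(n-2)/n$. Consequences:
\begin{itemize}
\item No neighbour bound ``of order $d$'' can exist; the Lipschitz bound $d_G(u)+d-2$ is attained.
\item Bounding each of the $n-2$ non-neighbour terms by $c_{n-1}d$ overshoots by $(n-2)(n-3)/(n-1)>0$.
\item Sign information cannot rescue this, because the only reason those terms vanish is that $d_{R_u}(x)=0$.
\item Your extremal-ratio attempt fails for the same reason. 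After the relaxation it yields only $\rho\bigl((n-d)d-\sum_{u\in N_G(x)}d_G(u)\bigr)\le(n-d-1)c_{n-1}d$, and the bracket can be $0$, or positive but far smaller than $(n-d-1)d$.
\end{itemize}

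\textbf{The missing idea} is to keep the residual degree $d_{R_u}(x)$, which you wrote down and then relaxed to $d$, and to count it exactly. By triangle-freeness, for each $w\in N_G(x)$ every vertex of $N_G(w)\setminus\{x\}$ lies outside $N_G[x]$. Hence
\[
 \sum_{u\notin N_G[x]}d_{R_u}(x)=(n-d-1)d-\sum_{w\in N_G(x)}\bigl(d_G(w)-1\bigr),
\]
which is \eqref{eq:degreecount}. The paper combines this with two bounds:
\begin{itemize}
\item the crude bound $|b_x(R_u)|\le d_{R_u}(x)$ from \eqref{eq:crude-bias} for non-neighbours (no induction is needed);
\item the bound $d_G(u)+d-2$ for neighbours.
\end{itemize}
The $d_G(u)$ contributions then cancel, and the total is $d(d-1)+(n-d-1)d=(n-2)d$.

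With the same identity your extremal route can also be completed. The bracket above equals $\sum_{u\notin N_G[x]}d_{R_u}(x)$. If this sum is positive, keeping the bound $c_{n-1}d_{R_u}(x)$ from induction gives $\rho\le c_{n-1}\le c_n$. If it vanishes, every non-neighbour of $x$ is adjacent to all of $N_G(x)$, which forces $G\cong K_{d,n-d}$; there a direct computation gives $|b_x(G)|/d=|n-2d|/n\le c_n$.

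As written, however, the proposal contains neither the identity nor this case analysis, so \eqref{eq:centered} is not established.
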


\begin{proof}
The singleton case is immediate; assume $n\ge2$.

Fix $x$, put $d=d_G(x)$, and let $U=V(G)\setminus N_G[x]$. We first establish
\begin{equation}\label{eq:recurrence}
 n b_x(G)=\sum_{u\in N_G(x)}\bigl(\mu(R_u)-\mu(R_x)\bigr)
                  +\sum_{u\in U}b_x(R_u).
\end{equation}
For $u\in U$, deleting the two closed neighborhoods commutes:
\[
 R_u-N_{R_u}[x]=G-(N_G[u]\cup N_G[x])=R_x-N_{R_x}[u].
\]
Thus $\mu(R_u)=1+\mu(R_x-N_{R_x}[u])+b_x(R_u)$. Sum over $u\in U=V(R_x)$ and apply \eqref{eq:mean} to $R_x$ to obtain
\[
 \sum_{u\in U}\mu(R_u)=|U|\mu(R_x)+\sum_{u\in U}b_x(R_u).
\]
When $U=\varnothing$ both sides are zero. Substitution into $n\mu(G)=n+\sum_u\mu(R_u)$ gives \eqref{eq:recurrence}.

If $u$ is adjacent to $x$, triangle-freeness gives $N_G[u]\cap N_G[x]=\{u,x\}$. By \eqref{eq:lipschitz},
\[
 |\mu(R_u)-\mu(R_x)|\le d_G(u)+d-2.
\]
For $u\in U$, apply \eqref{eq:crude-bias} to $R_u$ to get $|b_x(R_u)|\le d_{R_u}(x)$. A count of ordered pairs $(u,w)$, where $w\in N_G(x)$ is removed by deleting $N_G[u]$, gives
\begin{equation}\label{eq:degreecount}
 \sum_{u\in U}d_{R_u}(x)
 =(n-d-1)d-\sum_{w\in N_G(x)}(d_G(w)-1).
\end{equation}
Indeed, for each such $w$, every vertex of $N_G(w)\setminus\{x\}$ lies in $U$, because there are no triangles. The count allows different common neighbors, so it does not exclude four-cycles.

Taking absolute values in \eqref{eq:recurrence} now gives
\begin{align*}
 n|b_x(G)|
 &\le \sum_{u\in N_G(x)}(d_G(u)+d-2)
 +(n-d-1)d-\sum_{w\in N_G(x)}(d_G(w)-1)\\
 &=d(n-2).
\end{align*}
This proves \eqref{eq:centered}, also for $n=2$ and for isolated $x$. Equation \eqref{eq:between} follows from \eqref{eq:mean}.
\end{proof}

\section{Variance and equality}

\begin{lemma}[Edge loss]\label{lem:edges}
For a nonempty triangle-free graph $G$ of order $n$,
\begin{equation}\label{eq:edges}
 \frac1n\sum_u e(R_u)+\frac1n\sum_u d_G(u)^2=e(G).
\end{equation}
\end{lemma}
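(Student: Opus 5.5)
We prove \eqref{eq:edges} by counting, for each vertex $u$, the edges destroyed when the closed neighborhood $N_G[u]$ is deleted. Multiplying through by $n$, the claim is equivalent to
\[
 \sum_u \bigl(e(G)-e(R_u)\bigr)=\sum_u d_G(u)^2 .
\]
So it suffices to show that the number of edges of $G$ meeting $N_G[u]$ is exactly $\sum_{w\in N_G(u)} d_G(w)$, and then to sum over $u$.

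Fix $u$. An edge is lost in passing to $R_u=G-N_G[u]$ precisely when it has at least one endpoint in $N_G[u]$. We sort these edges as follows:
\begin{itemize}
\item \emph{Edges at $u$.} These are the $d_G(u)$ edges $uw$ with $w\in N_G(u)$.
\item \emph{Edges inside $N_G(u)$.} Triangle-freeness makes $N_G(u)$ an independent set, so there are none.
\item \emph{Remaining edges.} These are edges $wz$ with $w\in N_G(u)$ and $z\notin N_G[u]$. Each has exactly one endpoint in $N_G(u)$, again because that set is independent. Hence each $w\in N_G(u)$ contributes $d_G(w)-1$ of them: all its edges except $wu$, and none of these other edges lands inside $N_G[u]$.
\end{itemize}
Therefore
\[
 e(G)-e(R_u)=d_G(u)+\sum_{w\in N_G(u)}(d_G(w)-1)=\sum_{w\in N_G(u)}d_G(w).
\]
If $u$ is isolated, both sides are $0$, which is consistent.

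Now sum over $u$ and swap the order of summation:
\[
 \sum_u\sum_{w\in N_G(u)}d_G(w)=\sum_w d_G(w)\cdot|\{u:u\sim w\}|=\sum_w d_G(w)^2 .
\]
Dividing by $n$ gives \eqref{eq:edges}. Nonemptiness of $G$ is needed only so that $n\ge1$ and the division makes sense.

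The only delicate point is the bookkeeping in the middle case, where we must make sure no edge is counted twice. Every lost edge has exactly one endpoint in $N_G(u)$ or is incident to $u$, but not both, since $u$ has no neighbor in common with any $w\in N_G(u)$. This is exactly where the triangle-free hypothesis enters, through the independence of $N_G(u)$.
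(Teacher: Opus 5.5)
Your proof is correct and follows the paper's argument: you compute the per-vertex count $e(G)-e(R_u)=\sum_{w\in N_G(u)}d_G(w)$ using independence of $N_G(u)$ under triangle-freeness, then double count $\sum_u\sum_{w\in N_G(u)}d_G(w)=\sum_w d_G(w)^2$. One small slip: your closing claim that a lost edge is incident to $u$ or has exactly one endpoint in $N_G(u)$, ``but not both,'' is literally false for the edges $uw$, though your bullet-list decomposition (edges at $u$, then edges from $N_G(u)$ to vertices outside $N_G[u]$) is disjoint and exhaustive as written.
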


\begin{proof}
Every edge removed by deleting $N_G[u]$ is incident to a neighbor of $u$. No edge joins two such neighbors, so
\[
 e(G)-e(R_u)=\sum_{w\in N_G(u)}d_G(w).
\]
Summing over $u$ proves \eqref{eq:edges}.
\end{proof}

\begin{proposition}[Elementary edge-count bound]\label{prop:baseline}
For every finite triangle-free graph $G$, $v(G)\le e(G)$.
\end{proposition}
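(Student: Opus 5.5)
We argue by strong induction on $n=|V(G)|$. The base cases are the empty graph and the singleton, where $v=0=e$. For a nonempty graph $G$ on $n\ge 2$ vertices we condition on the first vertex $u$, which is uniform on $V(G)$. Given $u$, the accepted count is $1+X_{R_u}$, where $R_u=G-N_G[u]$ receives a uniform order. The law of total variance then gives
\[
 v(G)=\frac1n\sum_u v(R_u)+\Var_u\bigl(a_u(G)\bigr),
\]
since the conditional mean is $a_u(G)$ and the conditional variance is $v(R_u)$.

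For the within-term, each $R_u$ is an induced subgraph of $G$. It is therefore triangle-free and has fewer vertices, because $u$ itself is deleted. The induction hypothesis gives $v(R_u)\le e(R_u)$, and this also covers the case $R_u=\varnothing$.

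For the between-term, we do not need the sharp constant. We use the crude estimate \eqref{eq:crude-bias}, $|b_u(G)|\le d_G(u)$, which follows directly from Lemma~\ref{lem:deletion}. Combined with \eqref{eq:mean}, it yields
\[
 \Var_u(a_u(G))=\frac1n\sum_u b_u(G)^2\le\frac1n\sum_u d_G(u)^2 .
\]
Lemma~\ref{lem:centered} would give the stronger bound with the factor $c_n^2\le1$, so either version suffices here.

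Adding the two bounds gives
\[
 v(G)\le\frac1n\sum_u e(R_u)+\frac1n\sum_u d_G(u)^2 .
\]
By Lemma~\ref{lem:edges}, the right-hand side equals $e(G)$ exactly, which closes the induction.

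The only point needing care is the bookkeeping in the law of total variance. The conditional distribution of the remaining arrivals must be uniform on $R_u$. It is, because conditioning a uniform permutation on its first entry leaves the remaining relative order uniform, and vertices of $N_G(u)$ are rejected regardless of where they fall. With that checked, I expect no real obstacle: triangle-freeness enters only through Lemma~\ref{lem:edges} and through the heredity of triangle-freeness to $R_u$.
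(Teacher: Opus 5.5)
Your proposal is correct and follows the paper's proof essentially verbatim: induction on the order, the law of total variance conditioned on the first vertex, the induction hypothesis applied to the residual graphs $R_u$, the crude bias bound \eqref{eq:crude-bias} for the between-term, and the edge-loss identity of Lemma~\ref{lem:edges} to close. As you note, and as the paper itself remarks, Lemma~\ref{lem:centered} is not needed for this bound.
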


\begin{proof}
Induct on the order, with $v(\varnothing)=e(\varnothing)=0$. For a nonempty graph, condition on its first vertex. Using induction on the smaller residual graphs, \eqref{eq:crude-bias}, and Lemma~\ref{lem:edges},
\[
 v(G)=\frac1n\sum_u v(R_u)+\frac1n\sum_u b_u(G)^2
 \le\frac1n\sum_u e(R_u)+\frac1n\sum_u d_G(u)^2=e(G).
\]
This argument uses the known deletion estimate, not Lemma~\ref{lem:centered} or Theorem~\ref{thm:main}.
\end{proof}

\begin{proof}[Proof of Theorem~\ref{thm:main}]
We first prove the bound by induction on the order over all triangle-free graphs. Counts are deterministic for orders zero, one, and two. For $n\ge3$, the law of total variance, conditional on the first vertex, gives
\begin{equation}\label{eq:variance}
 v(G)=\frac1n\sum_u v(R_u)+\frac1n\sum_u b_u(G)^2.
\end{equation}
The first term is the expected conditional variance; the second is the variance of the conditional mean. With $s_u=|V(R_u)|<n$, induction and monotonicity give $v(R_u)\le c_{s_u}^2e(R_u)\le c_n^2e(R_u)$, including empty residual graphs. Applying \eqref{eq:between} and \eqref{eq:edges} to \eqref{eq:variance} yields
\[
 v(G)\le\frac{c_n^2}{n}\left(\sum_u e(R_u)+\sum_u d_G(u)^2\right)
       =c_n^2 e(G).
\]

For equality, edgeless graphs attain the zero bound. If $e(G)>0$ and $n=2$, then $G=K_{1,1}$. Suppose $n\ge3$. If some $R_u$ has an edge, then $2\le s_u<n$ and $c_{s_u}<c_n$, making the corresponding induction estimate strict. Thus equality requires every $R_u$ to be edgeless.

Choose an edge $xy$. Every other vertex must be adjacent to at least one of $x,y$, since otherwise $xy$ survives in its residual graph; triangle-freeness prevents adjacency to both. Therefore $A=N_G(y)$ and $B=N_G(x)$ partition $V(G)$ into independent sets, with $x\in A$ and $y\in B$. If $z\in A\setminus\{x\}$ and $w\in B$ were nonadjacent, then the edge $xw$ would survive in $R_z$, a contradiction. Hence $G=K_{a,b}$ for positive integers $a=|A|$, $b=|B|$.

The first vertex of $K_{a,b}$ determines the accepted part. Its count is $a$ with probability $a/(a+b)$ and $b$ with probability $b/(a+b)$, so
\[
 v(K_{a,b})=\frac{ab(a-b)^2}{(a+b)^2},\qquad e(K_{a,b})=ab.
\]
As $a+b=n$, equality in \eqref{eq:main} is equivalent to $|a-b|=n-2$, or $\min(a,b)=1$. These are exactly the connected stars, and the same computation proves that they attain equality.
\end{proof}

\begin{proof}[Proof of Corollary~\ref{cor:tree}]
A tree of order $n\ge2$ is triangle-free and has $n-1$ edges, so apply Theorem~\ref{thm:main}. For $n=1$, the count is identically one and the bound is zero.
\end{proof}

\section{Scope of the bound}

Theorem~\ref{thm:main} is sharp uniformly and gives the exact maximum over trees of each order. It does not give an attained maximum for every prescribed pair $(n,e)$. Nor does it assert a central limit theorem.

Relative to Proposition~\ref{prop:baseline}, the finite-order improvement is $4e(G)(n-1)/n^2$. For a tree it is
\[
 (n-1)-\frac{(n-1)(n-2)^2}{n^2}=4\left(1-\frac1n\right)^2,
\]
which tends to four. The coefficient one in the baseline cannot be decreased uniformly in $n$: the explicit star law gives $v(K_{1,n-1})/e(K_{1,n-1})=(1-2/n)^2\to1$. The refinement supplies the exact finite-order tree maximum and its equality cases.

Triangle-freeness is used in both \eqref{eq:degreecount} and \eqref{eq:edges}. If $\tau(G)$ is the number of triangles in an arbitrary graph, the latter identity becomes
\[
 \frac1n\sum_u e(R_u)+\frac1n\sum_u d_G(u)^2
 =e(G)+\frac{3\tau(G)}n.
\]
Indeed, the removed-edge count is $\sum_{w\in N_G(u)}d_G(w)-e(G[N_G(u)])$, and $\sum_u e(G[N_G(u)])=3\tau(G)$. The argument above does not settle whether \eqref{eq:main} continues to hold when triangles are present.

\end{document}